\newtheorem{thm}{Theorem}[section]
\newtheorem{defi}{Definition}[section]
\newtheorem{lem}{Lemma}[section]
\newtheorem{cor}{Corollary}[section]
\newtheorem{rem}{Remark}[section]
\newtheorem{ex}{Example}[section]
\newcommand{\be}{\begin{equation}}
\newcommand{\ee}{\end{equation}}
\newcommand{\bea}{\begin{eqnarray}}
\newcommand{\eea}{\end{eqnarray}}
\newcommand{\beb}{\begin{eqnarray*}}
\newcommand{\eeb}{\end{eqnarray*}}
\numberwithin{equation}{section}
\begin{document}
%
\title[Curvature properties of Melvin magnetic metric]{Curvature properties of Melvin magnetic metric}
\author[A. A. Shaikh, Akram Ali, Ali H. Alkhaldi and D. Chakraborty]{Absos Ali Shaikh$^1$, Akram Ali$^2$, Ali H. Alkhaldi$^2$ and Dhyanesh Chakraborty$^1$}
\date{}
\address{\noindent\newline$^1$ Department of Mathematics,
\newline University of Burdwan, 
\newline Golapbag, Burdwan-713104,
\newline West Bengal, India} 
\email{aask2003@yahoo.co.in, aashaikh@mathburuniv.ac.in}
\email{dhyanesh2011@gmail.com}

\address{\noindent\newline$^2$ Department of Mathematics,
	\newline College of Science,
	\newline King Khalid University, 
	\newline 9004 Abha, Saudi Arabia} 
\email{akramali133@gmail.com ; akali@kku.edu.sa}
\email{ahalkhaldi@kku.edu.sa}
%
%
%
\begin{abstract}
This paper aims to investigate the curvature restricted geometric properties admitted by Melvin magnetic spacetime metric, a warped product metric with $1$-dimensional fibre. For this, we have considered a Melvin type static, cylindrically symmetric spacetime metric in Weyl form and it is found that such metric, in general, is generalized Roter type, $Ein(3)$ and has pseudosymmetric Weyl conformal tensor satisfying the pseudosymmetric type condition $R\cdot R-Q(S,R)=\mathcal L' Q(g,C)$. The condition for which it satisfies the Roter type condition has been obtained. It is interesting to note that Melvin magnetic metric is pseudosymmetric and pseudosymmetric due to conformal tensor. Moreover such metric is $2$-quasi-Einstien, its Ricci tensor is Reimann compatible and Weyl conformal $2$-forms are recurrent. The Maxwell tensor is also pseudosymmetric type.
\end{abstract}
\noindent\footnotetext{ $^*$ Corresponding author.\\
$\mathbf{2010}$ \hspace{5pt} Mathematics\; Subject\; Classification: 53B20, 53B25, 53B30, 53B50, 53C15, 53C25, 53C35.\\ 
\emph{Key words and phrases:} Melvin magnetic spacetime, Einstein-Maxwell field equation, Weyl conformal curvature tensor, pseudosymmetric type curvature condition, 2-quasi-Einstein manifold.}
\maketitle
%
\section{\bf{Introduction}}
%
\indent We consider $M$ as a connected and smooth manifold of dimension $n(\ge 3)$, on which a semi-Reimannian metric $g$ with signature $(\sigma,n-\sigma)$, $0\le \sigma \le n$, is endowed. Then $(M,g)$ is called an $n$-dimensional Reimannian (resp, Lorentzian) manifold if $\sigma =0$ or $n$ (resp, $\sigma =1$ or $n-1$). The Levi Civita connection, the Riemann curvature tensor, the Ricci tensor and the scalar curvature of $M$ are respectively denoted by $\nabla$, $R$, $S$ and $\kappa$. We mention that a spacetime is a $4$-dimensional connected Lorentzian manifold.\\
\indent A stationary, cylindrically symmetric electrovac solution to Einstein-Maxwell system of equations in general relativity was obtained by Melvin \cite{Melvin64} in $1964$ and such solution is often referred as  `Melvin Magnetic Universe'. Physically, this spacetime describes a bundle of parallel magnetic lines of force which under their mutual gravitational attraction remain in equilibrium. It is interesting to note that such spacetime is not assymptotically flat and has not singularity but is geodesically complete. About an axis of symmetry the spacetime is invariant under rotation and translation, and also invariant under reflection in planes containing that axis or perpendicular to it. Moreover it admits four Killing vectors and Weyl conformal tensor is Petrov type D.\\
\indent Melvin himself in \cite{Melvin65}  proved his spacetime to be stable against small radial perturbations. Thorne \cite{Thorne65} further showed the stability of such spacetime against any large perturbations and also suggested that this spacetime has a great value in the study of extragalactic sources of radio waves. In this paper we are really tempted to investigate the geometric structures of such magnetic spacetime. \\
\indent The geometry of Melvin magnetic spacetime in cylindrical symmetry is described by the metric
\bea\label{MMM}
ds^2=U_B^2(-dt^2 +dr^2 +dz^2) +\frac{r^2}{U_B^2}d\phi^2
\eea
where $U_B=1 +\frac{B_0r^2}{4}$, $B_0$ is the magnetic field at $r=0$ axis. The Maxwell field is given by
\bea\label{MF}
F=\frac{B_0r^2}{U_B^2}d\phi \wedge dr^2.
\eea
The coordinates $t$, $r$, $z$ and $\phi$ are respectively dimensionless time, radial, longitudinal and azimuthal angle. It is easy to see that for vanishing magnetic field, $U_B=1$ and the metric (\ref{MMM}) reduces to the Minkowski metric in cylindrical coordinates. It may be noted that the metric \eqref{MMM} can be written as a warped product metric 
$$ds^2= d\bar{s}^2 + F^2d\tilde{s}^2$$ where $d\bar{s}^2=(1 +\frac{B_0r^2}{4})^2(-dt^2 +dr^2 +dz^2)$, $d\tilde{s}^2=d\phi^2$ and the warping function $F(r)=r/(1 +\frac{B_0r^2}{4}).$ 
In the Weyl form an eletrovac or vacuum static, cylindrically symmetric metric can be written as 
\bea\label{LCM}
ds^2=-e^{2\psi}dt^2 +e^{-2\psi}[e^{2 \lambda}(dr^2 +dz^2) +r^2d\phi^2]
\eea
where $\psi$ and $\lambda$ are no-where vanishing smooth functions of $r$ and $z$. We see that for $\psi=2\lambda =ln U_B$, the metric (\ref{MMM}) is obtained from the metric (\ref{LCM}). Thus we can see Melvin type metric in Weyl form as
\bea\label{MTM}
ds^2=e^{2f(r)}(-dt^2 +dr^2 +dz^2) +r^2e^{-2f(r)}d\phi^2.
\eea 
\indent In the literature of differential geometry, the notion of pseudosymmetry generalizes the notions of (local) symmetry ($\nabla R=0$) and semisymmetry ($R\cdot R=0$ ) introduced by Cartan \cite{C26, C46}. Such a notion of pseudosymmetry has a great importance in the study of general relativity and cosmology due to its applications. Precisely, many spacetimes are pseudosymmetric. The extention of this notion to other curvature tensors are called pseudosymmetric type curvature conditions. By curvature restricted geometric structures of a manifold mean the structures arose due to restriction of covariant derivative(s) on various curvature tensors of that manifold. Again, the study of local symmetry has been extended to various concepts such as recurrency \cite{R46, R49,Ru49}, generalized recurrency \cite{SAR13, SK14, SP10, SR10, SR11, SRK17, SRK15}, curvature $2$-forms of recurrency \cite{B87, LR89}, weakly symmetry \cite{TB89}, Chaki pseudosymmetry \cite{C87} etc. our aim in this paper is to investigate such type of geometry by means of restriction on several curvature tensors of Melvin magnetic metric \eqref{MMM}. We mention that Robertson-Walker spacetimes \cite{ADEHM14, DDHKS00, DK99}, the warped products of $1$-dimensional base and $3$-dimensional fibre, are pseudosymmetric. Also Schwarzschild spacetime \cite{GP09}, Robinson-Trautman spacetime \cite{SAA18} etc. are models of pseudosymmetric warped product spacetimes of $2$-dimensional base and $2$-dimensional fibre. The curvature properties of an warped product metric have also been investigated in \cite{SM}  Thus, it is worthy to investigate the geometric structures of a warped product spacetime like Melvin of $3$-dimensional base and $1$-dimensional fibre.  \\
\indent The planning of this paper is as follows: Section $2$ is devoted to the preliminaries of various curvature tensors and curvature restricted geometric structures. In section $3$ we obtain the geometric structures of Melvin type metric \eqref{MTM} and also obtain the conditions for which it is conformally flat, pseudosymmetric and Roter type manifold. The curvature properties of Melvin magnetic spacetime have been determined in section $4$ and it is shown that such metric is pseudosymmetric and has pseudosymmetric Maxwell tensor. Also the tensor $C\cdot R- R\cdot C$ is lineraly dependent with $Q(S,C)$ i.e., a generalized Einstein metric condition is satisfied on this spacetime. Moreover, it is $2$-quasi-Einstein and Roter type. Finally we add some conclusion in section $5$.
%
%
\section{\bf{Preliminaries}}
\indent In this section we will review the notations and definitions of various curvature tensors and some useful curvature restricted geometric structures. For this the endomorphisms $U\wedge_E V$, $\mathcal R$, $\mathcal C$, $\mathcal P$, $\mathcal W$ and $\mathcal K$ on $M$ are respectively defined as \cite{DHJKS14, SK19}
\bea\label{gct}
(U\wedge_E V)Z &=& E(V,Z)U-E(U,Z)V , \nonumber \\
\mathcal{R}(U,V) &=& \left[ \nabla_U, \nabla_V\right]- \nabla_{[U,V]}, \nonumber\\
\mathcal{C} &=& \mathcal{R}-\frac{1}{(n-2)}(\wedge_g\mathcal{J}+\mathcal{J}\wedge_g-\frac{\kappa}{n-1}\wedge_g ),\nonumber \\
\mathcal{P} &=& \mathcal{R}-\frac{1}{(n-1)}\wedge_S ,\nonumber \\
\mathcal{W}&=&\mathcal{R}-\frac{\kappa}{n(n-1)}\wedge_g \nonumber \; \; \mbox{and}\\
\mathcal{K}&=&\mathcal{R}-\frac{1}{(n-2)}(\wedge_g\mathcal{J}+\mathcal{J}\wedge_g) \nonumber
\eea
where $E$ is a symmetric $(0,2)$-tensor, $\mathcal J$ is the Ricci operator defined by $S(X,Y)=g(X,\mathcal J Y)$ and throught this paper we consider $X$, $Y$, $Z$, $Z_i$, $U$, $V$ $\in$ $\chi(M)$, the Lie algebra of all smooth vector fields on $M$. The Kulkarni-Nomizu product $E\wedge F$ for two symmetric $(0,2)$ tensors $E$ and $F$ is defined by \cite{DG02, DGHS11, DGHS98, DH03, G02, SK17}
\bea
(E\wedge F)(X,Y,U,V)&=&E(X,V)F(Y,U)-E(X,U)F(Y,V)   \nonumber\\
&+&E(Y,U)F(X,V)-E(Y,V)F(X,U).\nonumber
\eea
Now we define the $(0,4)$-tensor $H$ corresponding to an endomorphism $\mathcal{H}(Z_1,Z_2)$ on $M$ as
\bea
H(Z_1,Z_2,Z_3,Z_4)=g(\mathcal{H}(Z_1,Z_2)Z_3,Z_4)
\eea
Replacing $\mathcal{H}$ in above by $\mathcal C$ (resp., $\mathcal P$, $\mathcal W$, $\mathcal K$ and $Z_1\wedge_g Z_2$) one can easily obtain the conformal curvature tensor $C$ (resp., the projective curvature tensor $P$, the concircular curvature tensor $W$, the conharmonic curvature tensor $K$ and the Gaussian curvature tensor $G$). Let $(M,g)$ be covered with a system of coordinate charts $(\mathcal U, x^{\alpha})$. Then the local components of the $(0,4)$-tensors $E\wedge F$, $R$, $C$, $P$, $W$ and $K$ are written as
\bea
(E\wedge F)_{abcd} &=& E_{ad}F_{bc}- E_{ac}F_{bd} +E_{bc}F_{ad} -E_{bd}F_{ac},\nonumber \\
R_{abcd} &=& g_{a\alpha}(\partial_{d}\Gamma^{\alpha}_{bc} - \partial_c \Gamma^{\alpha}_{bd} + \Gamma^{\beta}_{bc}\Gamma^{\alpha}_{\beta d} - \Gamma^{\beta}_{bd}\Gamma^{\alpha}_{\beta c}), \nonumber \\
C_{abcd} &=& R_{abcd} - \frac{1}{(n-2)}(g\wedge S)_{abcd} + \frac{\kappa}{(n-1)(n-2)}G_{abcd}, \nonumber \\
P_{abcd} &=& R_{abcd} -\frac{1}{(n-1)}(g_{ad}S_{bc} - g_{bd}S_{ac}), \nonumber \\
W_{abcd} &=& R_{abcd} - \frac{\kappa}{n(n-1)}G_{abcd} \nonumber \; \; \mbox{and} \\
K_{abcd} &=& R_{abcd} - \frac{1}{(n-2)}(g\wedge S)_{abcd} \nonumber
\eea
where $\Gamma^a_{bc}$ are Christoffel symbols of $2$nd kind, $\partial_\alpha=\frac{\partial}{\partial x^\alpha}$ and $G_{abcd}=\frac{1}{2}(g\wedge g)_{abcd}$.
Let $T$ be a $(0,k)$-tensor, $k\geq 1$, and the operation of the endomorphisms $\mathcal{A}(X,Y)$ and $X\wedge_B Y$ on it give rise two $(0,k+2)$-tensors $A\cdot T$ and $Q(B,T)$ \cite{DG02, DGHS98, SDHJK15, SK14, T74} respectively given as
\beb
(A\cdot T)(Z_1,Z_2,\cdots,Z_k;U,V)&=&(\mathcal{A}(U,V)\cdot T)(Z_1,Z_2,\cdots,Z_k)\\
&&\hspace{-1in} =-T(\mathcal{A}(U,V)Z_1,Z_2,\cdots,Z_k)- \cdots -T(Z_1,Z_2,\cdots,\mathcal{A}(U,V)Z_k) \;\;and
\eeb
\beb
Q(B,T)(Z_1,Z_2,\cdots,Z_k;U,V)&=&((U\wedge_B V)\cdot T)(Z_1,Z_2,\cdots,Z_k)\\
&&\hspace{-2in} =-T((U\wedge_B V)Z_1,Z_2,\cdots,Z_k)- \cdots -T(Z_1,Z_2,\cdots,(U\wedge_B V)Z_k)\\
&&\hspace{-2in} = B(U, Z_1) T(V,Z_2,\cdots,Z_k) + \cdots + B(U, Z_k) T(Z_1,Z_2,\cdots,V)\\
&&\hspace{-2in} - B(V, Z_1) T(U,Z_2,\cdots,Z_k) - \cdots - B(V, Z_k) T(Z_1,Z_2,\cdots,U).
\eeb
where $A(U,V,Z_1,Z_2)=g(\mathcal{A}(U,V)Z_1,Z_2)$ and $B$ is a symmetric $(0,2)$ tensor.
The local expressions of the tensors $A\cdot T$ and $Q(B,T)$ are 
\bea
(A\cdot T)_{a_1a_2...a_k\alpha \beta} &=& -g^{rs}[A_{\alpha \beta a_1s}T_{ra_2...a_k} +\cdots +A_{\alpha \beta a_ks}T_{a_1a_2...r}],\nonumber \\
Q(B,T)_{a_1a_2...a_k \alpha \beta} &=& B_{\beta a_1}T_{\alpha a_2...a_k} + \cdots + B_{\beta a_k}T_{a_1a_2...\alpha} \nonumber \\ &-& B_{\alpha a_1}T_{\beta a_2...a_k} - \cdots - B_{\alpha a_k}T_{a_1a_2...\beta}\nonumber.
\eea
\begin{defi}\label{defi2.1}(\cite{AD83, C46, DDP94, DDVV94, D92, SK14, SK18, S82, S84, S85})
If on $(M,g)$ the relation $A\cdot T=0$ holds then it is said to be $T$- semisymmetric due to $A$ and said to be $T$-pseudosymmetric due to $A$ if $A\cdot T=\mathcal L_T Q(g,T)$ holds on $M$, where $\mathcal L_T$ is smooth function on $\left\{x\in M:Q(g,T)_x\ne 0\right\}.$
\end{defi}
If we replace $A$ by $R$ and $T$ by $R$ (resp., $S$, $C$, $W$ and $K$ ) in the above definition then we obtain semisymmetric (resp., Ricci, conformally, concircularly and conharmonically semisymmetric) and pseudosymmetric (resp., Ricci, conformally, concircularly and conharmonically pseudosymmetric) manifolds respectively. Also replacing $A$ by $C$, $W$ and $K$ we obtain the corresponding pseudosymmetric type curvature conditions.
\begin{rem}\label{rem1}
1. In $Theorem \; 4.1$ of \cite{DDP94} it was proved that every warped product $\bar{M}\times_f\tilde{N}$ with $dim \bar{M}=1$ and $\dim \tilde{N}=3$ satisfies 
\bea\label{WT}
C\cdot C=\mathcal L_C'\;Q(g,C)
\eea
for some function $\mathcal L_C'$ and Robertson-Walker spacetime \cite{ADEHM14, DDHKS00, DK99} admits this property.\\
2. In $Theorem \; 2$ of \cite{D91} it was given that every warped product $\bar{M}\times_f\tilde{N}$ with $dim \bar{M}=2$ and $\dim \tilde{N}=2$ satisfies 
\bea\label{DT}
R\cdot R- Q(S,R)=\mathcal L' \; Q(g,C)
\eea
for some function $\mathcal L'$ and Schwarzschild spacetime \cite{GP09}, Robinson-Trautman spacetime \cite{SAA18} are examples of such warped products. 
\end{rem}
The manifold $M$ is called a $k$-quasi-Einstein manifold if the rank of $(S-\alpha g)$ is $k$, $0\leq k\leq (n-1)$, for some scalar $\alpha$. For $k=1$ (resp., $k=0$) the manifold is called quasi-Einstein (resp., Einstein) and the Ricci tensor locally takes the form \cite{S1, SHY09, S2} $$S_{ab}=\alpha g_{ab} + \beta \Pi_a \otimes \Pi_b$$ $$(resp., S_{ab}=\alpha \; g_{ab})$$ where $\alpha$, $\beta$ are scalars and $\Pi_a$ are the components of the covector $\Pi$. Again on the class of non-quasi-Einstein manifolds, the notion of pseudo quasi-Einstein manifolds was introduced in \cite{S09} by decomposing the Ricci tensor $$S_{ab}=\alpha g_{ab} + \beta \Pi_a\otimes \Pi_b + \gamma F_{ab}$$ where $\gamma$ is also a scalar and $F_{ab}$ are components of a trace free $(0,2)$-tensor $F$ such that $F(Y,U)=0$, $U$ is the associated vector field of $\Pi$. In particular, for $F_{ab}=\Pi_a\otimes \delta_b + \delta_a\otimes \Pi_b$, $\delta_a$ are the components of the covector $\delta$, it is generalized quasi-Einstein due to Chaki \cite{C01}. We note that Siklos spacetime \cite{SDD} is quasi-Einstein and Som-Raychaudhuri spacetime \cite{SK16} is $2$-quasi-Einstein as well as generalized quasi-Einstein. For curvature properties of Vaidya metric, we refer the reader to see \cite{SKS17}.
\begin{defi}(\cite{B87, SK19})
If on a semi-Reimannian manifold $M$ the relation
\bea
n_0g + n_1S + n_2S^2 + n_3S^3 + n_4S^4 = 0, \nonumber \; \; n_4\ne 0
\eea
holds where $S^j(Z_1,Z_2)=g(Z_1,\mathcal J^{j-1}Z_2)$ and $n_0$, $n_1$, $n_j$ $\in$ $C^\infty(M)$, the ring of all smooth functions on $M$ $(2\le j \le 4)$ then it is called $Ein(4)$ manifold. For $n_4=0$ (resp., $n_3=n_4=0$) it is $Ein(3)$ (resp., $Ein(2)$) manifold.
\end{defi}
We mention that Siklos spacetime \cite{SDD} is $Ein(2)$ while Lifshitz spacetime \cite{SSC19} is $Ein(3)$.
\begin{defi}\label{defi2.4}(\cite{DGJPZ13, DGJZ16, DGPV15, SDHJK15, SK16, SK19, S15})
	If the tensor $R$ of $M$ can be expressed as  $$R=e_1S\wedge S + e_2S\wedge\ S^2 + e_3S^2\wedge S^2 + e_4g\wedge g + e_5g\wedge S + e_6 g\wedge S^2$$ for some $e_i\in C^\infty(M)$, $1\leq i\leq 6$, then it is called generalized Roter type manifold. For $e_2=e_3=e_6=0$ it is Roter type manifold (\cite{D03, DG02, DGPV11, DK03, G07}).
\end{defi}
\begin{defi}(\cite{DHJKS14, G78, SB})
If on a semi-Riemannian manifold $M$ the Ricci tensor satisfies the condition
$$\sum_{Z_1,Z_2,Z_3}\nabla_{Z_1}S(Z_2,Z_3)=0$$
$$(resp., \; \nabla_{Z_1}S(Z_2,Z_3) -\nabla_{Z_2}S(Z_1,Z_3)=0)$$ where $\sum$ denotes the cyclic sum over $Z_1$, $Z_2$, $Z_3$, then it is said to be of cyclic parallel Ricci tensor (resp., Codazzi type Ricci tensor).
\end{defi}
\begin{defi}(\cite{SK12, SK19, SJ06})
A weakly symmetric manifold is defined by the equation
\beb
\nabla_{\alpha}  R_{abcd} = \Pi_{\alpha} R_{abcd} + \Phi_a R_{\alpha,b,c,d} + \overline \Phi_b R_{a\alpha cd} + \Psi_c R_{ab\alpha d} + \overline \Psi_d R_{abc\alpha}
\eeb
where $\Pi, \Phi, \overline \Phi, \Psi$ and $\overline \Psi$ are covectors on $M$ with local components $\Pi_a, \; \Phi_a, \; \overline \Phi_a, \; \Psi_a, \;\mbox{and} \; \overline \Psi_a$ respectively. In particular, if $\frac{1}{2}\Pi = \Phi = \overline \Phi = \Psi = \overline \Psi$ (resp., $\Phi = \overline \Phi = \Psi = \overline \Psi = 0$), then the manifold reduces to Chaki pseudosymmetric manifold \cite{C87} (resp., recurrent manifold \cite{R46, R49, W50}).
\end{defi}
\begin{defi}(\cite{MM12, MM1210})
Let $H$ be a $(0,4)$-tensor and $E$ be a symmetric $(0,2)$-tensor corresponding to the endomorphism $\mathcal E$ on $M$. Then the tensor $A$ on $M$ is said to be $H$-compatible if $$\sum_{Z_1,Z_2,Z_3}H(\mathcal E Z_1,X,Z_2,Z_3) = 0$$ holds. Again an $1$-form $\Phi$ is said to be $H$-compatible if $\Phi\otimes\Phi$ is $H$-compatible.
\end{defi}
Replacing $H$ by $R$, $W$, $K$, $C$ and $P$ we can define, respectively, Reimann compatibility, concircular compatibility, conharmonic compatibility, conformal compatibility and projective compatibility.
\begin{defi}\label{2forms}
Let $H$ be a $(0,4)$-tensor and $E$ be a symmetric $(0,2)$-tensor corresponding to the endomorphisms $\mathcal E$ on $M$. Then the corresponding curvature $2$-forms $\Omega^m_{(H)l}$ (\cite{B87, LR89}) are recurrent if (\cite{MS12, MS13, MS14})
\beb
\sum_{Z_1,Z_2,Z_3}(\nabla_{Z_1}H)(Z_2,Z_3,X,Y) = \sum_{Z_1,Z_2,Z_3}\Pi(Z_1)H(Z_2,Z_3,X,Y)
\eeb
and the $1$-forms $\Lambda_{(E)l}$ (\cite{SKP03}) are recurrent if
$$(\nabla_{Z_1}E)(Z_2,X)-(\nabla_{Z_2}E)(Z_1,X)=\Pi(Z_1)E(Z_2,X)-\Pi(Z_2)E(Z_1,X)$$ for some covector $\Pi$.
\end{defi}
\begin{defi}(\cite{P95, V85})
Let $L(M)$ be the vector space formed by all covectors $\theta$ on $M$ satisfying $$\sum_{Z_1,Z_2,Z_3}\theta(Z_1)H(Z_2,Z_3,X,Y)=0$$ where $H$ is a $(0,4)$ tensor. Then $M$ is said to be a $H$-space by Venzi if $dimL(M)\geq 1$.
\end{defi}
%
%
\section{\bf{Geometric structures of Melvin type metric}}
%
Let $\bar{M}=\left\lbrace \; (t,r,z)\; : \; r > 0 \; \right\rbrace $ be an open connected non-empty subset of $\mathbb{R}^3$ and on $\bar{M}$ we define the metric $-\bar{g}_{11}=\bar{g}_{22}=\bar{g}_{33}=e^{2f(r)}$. Then the Melvin type spacetime in Weyl form is a warped product $M=\bar{M}\times_F \tilde{N}$ of $3$-dimensional base $(\bar{M}, \bar{g})$ and $1$-dimensional fibre $(\tilde{N},\tilde{g})=(S^1(1), d\phi^2)$ with the warping function $F(r)=re^{-f(r)}$.\\ 
The non-zero values of the local components $\bar{R}_{abcd}$,  $\bar{S}_{ab}$, $\bar{K}_{abcd}$ and $\bar{\kappa}$ (upto symmetry) are
\be\label{bs}
\left\{\begin{array}{c}
\bar{R}_{1212}=e^{2f}f''=-\bar{R}_{2323}, \; \bar{R}_{1313}=e^{2f}f'^2; \; \bar{S}_{11}=-(f'^2+f'')=-\bar{S}_{33},\\  \bar{S}_{22}=2f''; \; \bar{\kappa}=2e^{-2f}(f'^2+2f'');\\ \bar{K}_{1212}=-e^{2f}(f'^2+2f'')=\bar{K}_{1313}=-\bar{K}_{2323}.
\end{array}\right.
\ee
For every $3$-dimensional manifold the conformal curvature tensor vanishes and hence  $\bar{C}=0$ of $(\bar{M},\bar{g})$. The values of $\bar{K}_{abcd,\; \alpha}$, $(\bar{R}\cdot \bar{R})_{abcd\alpha \beta}$ and $Q(\bar{g},\bar{R})_{abcd\alpha \beta}$ (upto symmetry) are
\be\label{bbs}
\left\{\begin{array}{c}
\bar{K}_{1212,2}=2e^{2f}(f'^3+f'f''-f''')=\bar{K}_{1313,2}=-\bar{K}_{2323,2};\\ 
(\bar{R}\cdot \bar{R})_{132312}=-e^{2f}f''(f'^2-f'')=-(\bar{R}\cdot \bar{R})_{121323},\\ Q(\bar{g},\bar{R})_{132312}=-e^{4f}(f'^2-f'')=-Q(\bar{g},\bar{R})_{121323}.
\end{array}\right.
\ee
From \eqref{bs} and \eqref{bbs} we get the following geometric properties for $(\bar{M},\bar{g})$:
\begin{thm}\label{bpr}
The 3-dimensional base $(\bar{M},\bar{g})$ admits the following geometric structures:\\
1.  $\bar{R}\cdot \bar{R}=\mathcal L_{\bar R}\;Q(\bar{g},\bar{R})$ for $\mathcal L_{\bar R}=e^{-2f}f''$ i.e., pseodosymmetric,\\
2. Rank of $(\bar{S}-\alpha \bar{g})=1$ for $\alpha = e^{-2f}(f'^2+f'')$ i.e., it is quasi-Einstein,\\
3. satisfies the $Ein(2)$ condition $S^2+\mu_1S+ \mu_2g=0$ for $$\mu_1=-e^{-2f}(f'^2+3ff''),\;\;  \mu_2=2e^{-4f}f''(f'^2+f''),$$
4. if $(f'^2+2f'')\ne 0$ then the conharmonic tensor is recurrent with $1$-form of recurrency $$\Pi_1=0, \; \;  \Pi_2=\frac{2(f'''-f'f''-f'^3)}{(f'^2+2f'')}, \; \;  \Pi_3=0$$
5. if $(f'^2+f'') \ne 0$ then the Ricci $1$-form is recurrent with $1$-form of recurrency $$\Pi_1=0, \; \; \Pi_2= -\frac{(f'''-f'f''-f'^3)}{(f'^2+f'')}, \; \;  \Pi_3=0.$$
\end{thm}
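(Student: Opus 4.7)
The plan is to verify each of the five assertions by direct componentwise computation using the data already tabulated in \eqref{bs} and \eqref{bbs}. Because $f$ depends only on $r$ and the metric is diagonal with $\bar g_{11}=-e^{2f}$, $\bar g_{22}=\bar g_{33}=e^{2f}$, every tensor involved has only a handful of independent non-zero components, so the verifications are essentially algebraic.

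For \textbf{(1)}, the only independent surviving components (modulo the symmetries of $\bar R$) are $(\bar R\cdot\bar R)_{132312}$ and $Q(\bar g,\bar R)_{132312}$ listed in \eqref{bbs}; their ratio is $e^{-2f}f''$, and the same ratio arises for the paired component $(1,2,1,3,2,3)$, yielding $\mathcal L_{\bar R}=e^{-2f}f''$. For \textbf{(2)}, the diagonal tensor $\bar T_{ab}=\bar S_{ab}-\alpha\bar g_{ab}$ with $\alpha=e^{-2f}(f'^2+f'')$ satisfies $\bar T_{11}=\bar T_{33}=0$ and $\bar T_{22}=f''-f'^2$, so its rank equals $1$ as long as $f''\ne f'^2$. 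For \textbf{(3)}, form $S^2_{ab}=\bar g^{cd}\bar S_{ac}\bar S_{bd}$ along the diagonal and substitute into $S^2+\mu_1 S+\mu_2 g=0$; the equations for $a=1$ and $a=3$ coincide, leaving a $2\times 2$ linear system in $(\mu_1,\mu_2)$. Subtracting the two independent equations factors as $(f'^2-f'')\,[e^{-2f}(f'^2+3f'')+\mu_1]=0$, from which the stated values of $\mu_1,\mu_2$ emerge.

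For \textbf{(4)} and \textbf{(5)}, the only non-vanishing derivative index in \eqref{bbs} is $\alpha=2$, which forces $\Pi_1=\Pi_3=0$. For the conharmonic recurrence $\bar K_{abcd,\alpha}=\Pi_\alpha\bar K_{abcd}$, the scalar $\Pi_2$ is read off as $\bar K_{1212,2}/\bar K_{1212}=2(f'''-f'f''-f'^3)/(f'^2+2f'')$, valid whenever $f'^2+2f''\ne 0$, and consistency with the remaining components $\bar K_{1313},\bar K_{2323}$ follows because these equal $\pm\bar K_{1212}$ with the same derivative pattern. For the Ricci $1$-form recurrence in the sense of Definition \ref{2forms}, a brief computation of $\bar S_{ab;c}$ reduces the defining equation to scalar identities on the $r$-derivative components, and the same ratio construction produces $\Pi_2=-(f'''-f'f''-f'^3)/(f'^2+f'')$ under the hypothesis $f'^2+f''\ne 0$.

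The conceptual content is negligible; the main obstacle is careful bookkeeping of the signs introduced by $\bar g_{11}=-e^{2f}$ and the antisymmetries of $\bar R$ and $\bar K$ when forming $\bar R\cdot\bar R$, $Q(\bar g,\bar R)$ and $S^2$, together with the verification that the non-listed components of these tensors genuinely vanish so that a single scalar ratio suffices in each of \textbf{(1)}, \textbf{(4)}, and \textbf{(5)}.
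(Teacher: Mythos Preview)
Your proposal is correct and follows exactly the same approach as the paper: the paper's ``proof'' consists solely of the sentence ``From \eqref{bs} and \eqref{bbs} we get the following geometric properties for $(\bar{M},\bar{g})$'', i.e.\ the theorem is stated as a direct read-off from the tabulated components, and you have simply spelled out the componentwise ratios and linear algebra that make this read-off explicit. If anything, you have supplied more detail than the paper (e.g.\ the factorization $(f'^2-f'')[e^{-2f}(f'^2+3f'')+\mu_1]=0$ in part~(3), and the caveat $f''\ne f'^2$ needed for rank exactly $1$ in part~(2)); nothing in your write-up diverges from or goes beyond the paper's intended argument.
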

The non-zero values (upto symmetry) of $R_{abcd}$, $S_{ab}$, $\kappa$ and $C_{abcd}$ of the metric \eqref{MTM} are as follows:
\be
\begin{array}{c}
R_{1313}=e^{2f}f'^2,\; \; R_{1212}=e^{2f}f''=-R_{2323}, \; \; R_{1414}=e^{-2f}rf'(1-rf')=-R_{3434},\\ R_{2424}=e^{-2f}r(3f'-2rf'^2+rf''); \; \; S_{11}=-\frac{f'+rf''}{r}=-S_{33}, \\ S_{22}=-\frac{3f'-2rf'^2-rf''}{r},\; \;  S_{44}=e^{-4f}r(f'+rf''); \; \; \kappa=\frac{2e^{-2f}}{r}(rf''+rf'^2-f'); \\ C_{1212}=\frac{e^{2f}}{3r}(rf''-2rf'^2+2f')=-2C_{1313}=\frac{e^{-4f}}{r^2}C_{1414}=-C_{2323}=\frac{2e^{-4f}}{r^2}C_{2424}=-\frac{e^{-4f}}{r^2}C_{3434}.
\end{array}\nonumber
\ee
\begin{lem}\label{CFlem}
The Weyl conformal tensor of Melvin type metric \eqref{MTM} vanishes if and only if $(rf''-2rf'^2+2f')=0$.
\end{lem}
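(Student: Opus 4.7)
The plan is to read the equivalence directly off the explicit list of components of $C_{abcd}$ displayed just above the lemma. Every non-zero independent component of the Weyl tensor is written there as a constant multiple (with coefficients involving only $e^{\pm 2f}$, $e^{\pm 4f}$ and positive powers of $r$, all of which are non-vanishing on $\bar M=\{r>0\}$) of the single factor $(rf''-2rf'^2+2f')$. Hence one direction is immediate: if $(rf''-2rf'^2+2f')=0$, then $C_{1212}=0$, and the chain of equalities relating $C_{1212}$ to $C_{1313}$, $C_{1414}$, $C_{2323}$, $C_{2424}$ and $C_{3434}$ forces every other independent component to vanish, so $C\equiv 0$. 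Conversely, if $C=0$, then in particular $C_{1212}=\frac{e^{2f}}{3r}(rf''-2rf'^2+2f')=0$, and since $\frac{e^{2f}}{3r}\neq 0$ on $\bar M$ the bracketed factor must vanish.

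Of course, to make the proof self-contained one still has to justify the tabulated components. For that I would proceed in the standard order: first compute the Christoffel symbols of \eqref{MTM} from $g_{11}=-e^{2f}$, $g_{22}=g_{33}=e^{2f}$, $g_{44}=r^2e^{-2f}$; then form $R_{abcd}$ via the coordinate formula recalled in Section~2 and contract to obtain $S_{ab}$ and $\kappa$, which indeed match the values already listed; finally substitute into
\[
C_{abcd}=R_{abcd}-\tfrac{1}{2}(g\wedge S)_{abcd}+\tfrac{\kappa}{6}\,G_{abcd},
\]
with $n=4$, and simplify. A short check of the six independent components then produces the common factor $(rf''-2rf'^2+2f')$.

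The only real obstacle is bookkeeping in step (iii): when combining $R_{abcd}$, $(g\wedge S)_{abcd}$ and $\kappa\,G_{abcd}$ in each block, the terms $f'^2$, $f''$ and $rf'/r$ appear with several different $r$- and $e^f$-weights and must be organized carefully to collapse into the single factor $rf''-2rf'^2+2f'$. Once this simplification is carried out block by block, the bi-implication stated in the lemma reduces to the trivial inspection described in the first paragraph.
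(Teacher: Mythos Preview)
Your proposal is correct and follows exactly the paper's own line of argument: the paper's proof consists of the single sentence ``From the values of the non-zero components $C_{ijkl}$ the proof is obvious,'' which is precisely the inspection you describe in your first paragraph. Your additional remarks about deriving those components from scratch and the bookkeeping involved are accurate but go beyond what the paper supplies, since the components are taken as already computed.
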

\begin{proof}
From the values of the non-zero components $C_{ijkl}$ the proof is obvious.
\end{proof}
\begin{ex}
	We see that $f(r)=c_1 + \frac{1}{2}ln\left( \frac{r}{c_1r+2}\right) $ is the general solution of $rf''-2rf'^2 +f'=0$, where $c_1$ and $c_2$ are arbitrary constants. In particular, we take $c_1=0$ and $c_2=1$ such that $f(r)=\frac{1}{2}ln\frac{r}{r+2}$. Then the metric \eqref{MTM} with such $f(r)$ takes the form $$ds^2=\frac{r}{(r+2)}\left( -dt^2+dr^2+dz^2\right) + r(r+2)d\phi^2 $$ of which conformal curvature tensor vanishes.
\end{ex}
From above $Lemma$ we get 
$$U_C=\left\lbrace \; x\in M \; : \; (C)_x \ne 0 \;\right\rbrace=\left\lbrace \; (t,r,z,\phi) \; : \; (rf''-2rf'^2+2f') \ne 0 \; \right\rbrace.$$
The non-zero values (upto symmetry) of $(R\cdot R)_{abcd\alpha \beta}$ and $Q(g,R)_{abcd\alpha \beta}$ are given by
\be
\begin{array}{c}
(R\cdot R)_{132312}=-e^{2f}f''(f'^2-f'')=-(R\cdot R)_{121323}, \\ (R\cdot R)_{142412}=-e^{-2f}rf''(4f'-3rf'^2+rf'')=(R\cdot R)_{243423},\\ (R\cdot R)_{133414}=-e^{-2f}f'^2(1-rf')(1-2rf')=-(R\cdot R)_{131434}, \\ (R\cdot R)_{122414}=e^{-2f}f'(1-rf')(3f'-2rf'^2+2rf'') =(R\cdot R)_{232434},\\ (R\cdot R)_{121424}=-e^{-2f}(3f'-2rf'^2+rf'')(f'-rf'^2-rf'')=(R\cdot R)_{233424}; \\ Q(g,R)_{132312}=-e^{4f}(f'^2-f'')=-Q(g,R)_{121323},\; Q(g,R)_{142412}=-r(4f'-3rf'^2+rf'') \\  =Q(g,R)_{243423},\; Q(g,R)_{133414}=-rf'(1-2rf')=Q(g,R)_{131434}, Q(g,R)_{122414} \\ =r(3f'-2rf'^2+2rf'')=Q(g,R)_{232434}, \; Q(g,R)_{121424}=r(f'-rf'^2-rf'')=Q(g,R)_{233424}.
\end{array}\nonumber
\ee
From the above components we see that metric \eqref{MTM} is not pseudosymmetric but it is pseudosymmetric under certain condition.
\begin{lem}\label{PSlem}
The metric \eqref{MTM} satisfies the condition $R\cdot R=\mathcal L_R\;Q(g,R)$ with $\mathcal L_R=\frac{e^{-2f}}{r}(f'-rf'^2)$ if \;  $(rf''+rf'^2-f')=0$.
\end{lem}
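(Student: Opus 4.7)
The plan is to verify the pseudosymmetric identity $R\cdot R = \mathcal L_R\, Q(g,R)$ componentwise, using the explicit local expressions already displayed above for the non-zero entries of $R_{abcd}$, $(R\cdot R)_{abcd\alpha\beta}$ and $Q(g,R)_{abcd\alpha\beta}$. Both tensors are supported on exactly the same index tuples $(a,b,c,d,\alpha,\beta)$, so the identity collapses to the check that each ratio
\[
\frac{(R\cdot R)_{abcd\alpha\beta}}{Q(g,R)_{abcd\alpha\beta}}
\]
equals the claimed scalar $\mathcal L_R = \frac{e^{-2f}(f'-rf'^2)}{r}$ at the five essential index choices $132312$, $142412$, $133414$, $122414$ and $121424$; the remaining non-zero components then follow by the symmetries of $R$ and of $Q(g,\cdot)$.

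For the pairs $133414$ and $122414$, the elementary identity $f'(1-rf') = f' - rf'^2$ allows the top and bottom factors to cancel directly, producing $\mathcal L_R$ with no assumption on $f$. For the pairs $132312$ and $142412$ the common factors $(f'^2-f'')$ and $(4f'-3rf'^2+rf'')$ cancel and what remains is $e^{-2f}f''$; this coincides with $\mathcal L_R$ precisely when $rf'' = f' - rf'^2$, i.e.\ under the hypothesis $rf''+rf'^2-f'=0$. For the fifth pair $121424$ the hypothesis makes the factor $(f'-rf'^2-rf'')$, which appears in both $(R\cdot R)_{121424}$ and $Q(g,R)_{121424}$, vanish identically, so that both sides are zero and the proportionality is trivially satisfied.

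The only step requiring any genuine care is the simultaneous use of the hypothesis in the first two cases; everything else reduces to routine polynomial bookkeeping. I therefore expect the principal difficulty to lie only in the clean organisation of the componentwise comparison rather than in any conceptual obstruction, since the stated form $\mathcal L_R = \frac{e^{-2f}}{r}(f'-rf'^2)$ is singled out precisely so that the reduced ratios $e^{-2f}f''$ in the first two cases match it once the constraint $rf''+rf'^2-f'=0$ is imposed.
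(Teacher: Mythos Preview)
Your argument is essentially the same as the paper's: both proofs proceed by direct componentwise verification, using the explicit values of $(R\cdot R)_{abcd\alpha\beta}$ and $Q(g,R)_{abcd\alpha\beta}$ listed immediately before the lemma. The paper merely packages the comparison differently, forming the difference tensor $D\cdot R = R\cdot R - \mathcal L_R\,Q(g,R)$, observing that with the given $\mathcal L_R$ its only surviving non-zero components (up to symmetry) are those at the index positions $132312$, $142412$, $121424$ and their partners, and then noting that these vanish once $rf''+rf'^2-f'=0$; your ratio computation and the paper's difference computation are trivially equivalent. One small caution: your phrase ``the remaining non-zero components then follow by the symmetries'' is doing a bit of work, since not every pair in the paper's table (e.g.\ $133414$ versus $131434$) is linked by an obvious symmetry of $R\cdot R$ or $Q(g,R)$; phrasing the check as vanishing of $D\cdot R$ rather than equality of ratios sidesteps any issue with components where the denominator could vanish, which is the modest advantage of the paper's formulation.
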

\begin{proof}
We consider the endomorphism $\mathcal{D}(X,Y)=\mathcal{R}(X,Y)-\mathcal L_R'X\wedge_gY$, $\mathcal L_R'\in C^{\infty}(M)$. Then by operating this endomorphism on the tensor $R$, we obtain the $(0,6)$-tensor $(D\cdot R)(U_1,U_2,U_3,U_4,X,Y)$ as 
\bea
(D\cdot R)(U_1,U_2,U_3,U_4,X,Y)&=&(\mathcal{D}(X,Y)\cdot R)(U_1,U_2,U_3,U_4) \nonumber\\
&=& (R\cdot R)(U_1,U_2,U_3,U_4,X,Y)-\mathcal L_R'\; Q(g,R)(U_1,U_2,U_3,U_4,X,Y).\nonumber
\eea
Using the values of the components $R\cdot R_{ijklmn}$, $Q(g,R)_{ijklmn}$ and putting $\mathcal L_R'=\mathcal L_R$ in above, we see that the non-zero components (upto symmetry) of the tensor  $D\cdot R$ are $(D\cdot R)_{132312}$, $(D\cdot R)_{142412}$, $(D\cdot R)_{121323}$, $(D\cdot R)_{243423}$, $(D\cdot R)_{121424}$ and $(D\cdot R)_{233424}$.
Now, if  $(rf''+rf'^2-f')=0$ then $(D\cdot R)_{ijklmn}=0$ for $1\le i,j,k,l,m,n \le 4$.
Hence $R\cdot R=\mathcal L_R\;Q(g,R)$. This completes the proof.
\end{proof}
\begin{ex}
	It is easy to see that $f(r)=c_1 + ln(r^2+2c_2)$ is the general solution of $rf''+rf'-f'^2=0$, where $c_1$ and $c_2$ are arbitrary constants. In particular, we take $c_1=0$ and $c_2=\frac{1}{2}$ such that $f(r)=ln(1+r^2)$. Then the metric \eqref{MTM} with such $f(r)$ takes the form $$ds^2=\frac{1}{(1+r^2)^2}\left(-dt^2 +dr^2 + dz^2 \right) + r^2(1+r^2)^2 d\phi^2$$ which is pseudosymmetric.
	\end{ex}

Again the non-zero components $C_{abcd,\; \alpha}$, $(C\cdot C)_{abcd\alpha \beta}$, $Q(g,C)_{abcd\alpha \beta}$ and $Q(S,R)_{abcd\alpha \beta}$ (upto symmetry) are given by
\be
\begin{array}{c}
C_{1313,2}=\frac{2e^{2f}}{3r^2}(2rf'(2f'-2rf'^2+3rf'')+(2f'-2rf''-r^2f'''))=-2C_{1212,2} \\ =-\frac{2e^{4f}}{r^2}C_{1414,2}=2C_{2323,2}=-2e^{4f}C_{2424,3}=-e^{4f}C_{3434,2},\\ C_{1213,3}=\frac{e^{2f}}{r}f'(2f'-2rf'^2+rf'')=C_{1323,1}=-\frac{e^{4f}}{r^2}C_{1424,1}=\frac{e^{4f}}{r^2}C_{2434,3}; \\ r^2e^{-4f}(C\cdot C)_{132312}=\frac{e^{-2f}}{3}(2f'-2rf'^2+rf'')^2=-(C\cdot C)_{142412}=(C\cdot C)_{122414} \\ =-(C\cdot C)_{133414}=-r^2e^{-4f}(C\cdot C)_{121323}=-(C\cdot C)_{243423}=(C\cdot C)_{131434}=(C\cdot C)_{232434}; \\ r^2e^{-4f}Q(g,C)_{132312}=-Q(g,C)_{142412}=r(2f'-2rf'^2+rf'')=-r^2e^{-4f}Q(g,C)_{121323} \\ =Q(g,C)_{133414}=-Q(g,C)_{122414}=-Q(g,C)_{131434}=Q(g,C)_{243423}=-Q(g,C)_{232434};\\ Q(S,R)_{132312}=\frac{e^{2f}}{r}(f'^3(3-2rf')+f''(f'-rf'+rf''))=-Q(S,R)_{121323}, \\ Q(S,R)_{142412}=-e^{-2f}r(f'^3(3-2rf')+f''(5f'-3rf'^2+rf''))=Q(S,R)_{243423}, \\ Q(S,R)_{133414}=e^{-2f}f'(f'+rf'')=-Q(S,R)_{131434}, \\ Q(S,R)_{122414}=e^{-2f}f'(3-2rf')(f'+rf'')=Q(S,R)_{232434}, \\ Q(S,R)_{121424}=-e^{-2f}(3f'-2rf'^2+rf'')(f'-rf'^2-rf'')=Q(S,R)_{233424}.
\end{array}\nonumber
\ee
The values of the above components help us to obtain the following relations on $U_C$:
\bea\label{WPS}
C\cdot C=\mathcal L_C\; Q(g,C) \; \; for \; \;  \mathcal L_C=\frac{e^{-2f}}{3r}(2f'-2rf'^2+rf''),
\eea
\bea\label{SPS}
R\cdot R-Q(S,R)=\mathcal L\;Q(g,C) \; \; for \; \; \mathcal L=-\frac{e^{-4f}f'}{3r\mathcal L_C}(3f'^2-2rf'^3+f'').
\eea
The relation \eqref{WPS} shows that the metric \eqref{MTM} has pseudosymmetric Weyl tensor. Throughout this paper we consider the smooth functions $\mathcal L_R$, $\mathcal L_C$ and $\mathcal L$ as defined respectively in $Lemma$ \ref{PSlem}, relation \eqref{WPS} and \eqref{SPS}. By a straightforward calculation the local components of the tensors $g\wedge g$, $g\wedge S$, $g\wedge S^2$, $S\wedge S$ and $S\wedge S^2$ can easily be computed. Now these tensors help us to decompose the tensor $R$ as
\bea\label{GRT}
R=\mathcal L_{12}\;g\wedge S+ \mathcal L_{13}\;g\wedge S^2+ \mathcal L_{22}\;S\wedge S+ \mathcal L_{23}\;S\wedge S^2
\eea
where $\mathcal L_{12}=\frac{r(e^{2f}\mathcal L_R+f'')}{2(re^{2f}\mathcal L_R+f')}+\frac{f'}{2\mathcal L_r}$, $\mathcal L_{13}=-\frac{e^{4f}r^2f''}{4(re^{2f}\mathcal L_R+f')\mathcal L_r^2}$, $\mathcal L_{22}=(\mathcal L_{13}\mathcal L_re^{-2f}-\frac{r^2e^{4f}}{2\mathcal L_r^2}\mathcal L_R)$, $\mathcal L_{23}=2re^{2f}\mathcal L_{13}\mathcal L_R$ and $\mathcal L_r=(f'+rf'')$. Now contracting the relation \eqref{GRT} we get 
\bea\label{Ein3}
a_{11}\; g+ a_{22}\; S+ a_{33}\; S^2+ a_{44}\;S^3=0
\eea
where $a_{11}=-(2\mathcal L_R+ \frac{e^{-2f}}{r}\mathcal L_ra_{22})$, $a_{22}=-\frac{re^{2f}\mathcal L_R}{re^{2f}\mathcal L_R+ f'}$, $a_{33}=-\frac{re^{2f}}{\mathcal L_r}(a_{22}+ \frac{2re^{2f}}{\mathcal L_r}\mathcal L_R)$, $a_{44}=-\frac{r^2}{\mathcal L_r^2}e^{4f}a_{22}$ and $\mathcal L_r$ is defined above. Further from the non-zero components $C_{abcd}$ and $C_{abcd,\;\alpha}$, on $U_C$ we obtain the curvature $2$-forms of recurrency for conformal curvature tensor with $1$-form of recurrency as
\bea\label{c2f}
\Pi_1=0, \; \;  \Pi_2=-\frac{3re^{2f}}{\mathcal L_C}(2f' -2rf'^2 +2r^2f'^3 -2rf'' +3r^2f'f'' -r^2f'''),\; \; \Pi_3= 0,\;\; \Pi_4=0. \nonumber
\eea
From relation \eqref{WPS}--\eqref{Ein3} we can state the following:
\begin{thm}
	The metric \eqref{MTM} admits the following geometric structures on $U_C$:\\
	(i) it has pseudosymmetric Weyl conformal tensor,\\
	(ii) it satisfies the pseudosymmetric type condition $$R\cdot R -Q(S,R)=-e^{-2f}f'\frac{(3f'^2-2rf'^3+f'')}{(2f'-2rf'^2+rf'')}\;Q(g,C),$$
	(iii) it is generalized Roter type manifold,\\
	(iv) it is $Ein(3)$ manifold,\\
	(v) its conformal $2$-forms are recurrent.
	
\end{thm}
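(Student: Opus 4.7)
The plan is to deduce each of the five claims from an identity among tensor components that is already packaged in the displayed formulas preceding the theorem. Because $(M,g)$ is a warped product with a one-dimensional fibre and the metric \eqref{MTM} depends only on $r$, the non-vanishing components of $R_{abcd}$, $S_{ab}$, $C_{abcd}$ and their first covariant derivatives are sparse and have already been tabulated as rational expressions in $f$, $f'$, $f''$, $f'''$. I would take these tables as the starting input.

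For (i) and (ii), I would mimic the template of Lemma \ref{PSlem}: form the $(0,6)$-tensor $C\cdot C-\mathcal L_C\,Q(g,C)$ arising from the endomorphism $\mathcal C(X,Y)-\mathcal L_C\,X\wedge_g Y$, and verify that after substituting $\mathcal L_C=\frac{e^{-2f}}{3r}(2f'-2rf'^2+rf'')$ each candidate non-zero component vanishes on $U_C$. The same strategy applied to $\mathcal R(X,Y)-\mathcal L\,X\wedge_g Y$ and the tensor $R$, combined with the explicit components of $Q(S,R)$, gives $R\cdot R-Q(S,R)-\mathcal L\,Q(g,C)=0$, proving (ii). For (iii), I would verify that the ansatz of Definition \ref{defi2.4} with $e_1=\mathcal L_{22}$, $e_2=\mathcal L_{23}$, $e_5=\mathcal L_{12}$, $e_6=\mathcal L_{13}$ and $e_3=e_4=0$ reproduces every non-zero $R_{abcd}$. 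Contracting this decomposition with $g^{ad}$ then collapses the Kulkarni-Nomizu products $g\wedge S^j$ and $S\wedge S^j$ into linear combinations of $g$, $S$, $S^2$, $S^3$, yielding the $Ein(3)$ identity \eqref{Ein3} with the indicated $a_{ii}$; thus (iv) falls out of (iii).

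For (v), the non-zero entries of $C_{abcd,\alpha}$ all carry $\alpha=2$, which forces $\Pi_1=\Pi_3=\Pi_4=0$ in Definition \ref{2forms}, and I would then check that for every non-zero component the ratio $C_{abcd,2}/C_{abcd}$ equals the displayed $\Pi_2$, so that the cyclic recurrency relation reduces to a pointwise proportionality. The principal obstacle throughout is that the decomposition in (iii) is over-determined: the independent components of $R_{abcd}$ give more equations than the four unknowns $\mathcal L_{12},\mathcal L_{13},\mathcal L_{22},\mathcal L_{23}$, so compatibility must be verified on each Ricci-tableau block. This is where the auxiliary quantities $\mathcal L_r=f'+rf''$ and $re^{2f}\mathcal L_R+f'$ appearing in the denominators enter essentially, and it is the step I would tackle first to secure self-consistency of the ansatz before deducing (iv) from it.
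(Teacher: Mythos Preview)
Your treatment of (i)--(iv) is essentially the paper's own argument: the relations \eqref{WPS}, \eqref{SPS}, \eqref{GRT} and \eqref{Ein3} are obtained by direct component comparison with the tabulated values, and contracting \eqref{GRT} to get \eqref{Ein3} is exactly how the paper proceeds. Nothing to add there.

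Your argument for (v), however, has a concrete gap. The premise that ``the non-zero entries of $C_{abcd,\alpha}$ all carry $\alpha=2$'' is false: the paper's own tabulation lists $C_{1213,3}$, $C_{1323,1}$, $C_{1424,1}$ and $C_{2434,3}$ among the non-vanishing covariant derivatives. Consequently the cyclic recurrency condition of Definition~\ref{2forms} does \emph{not} reduce to the pointwise proportionality $C_{abcd,2}=\Pi_2\,C_{abcd}$. In fact that proportionality fails outright: from the displayed relations $C_{1212}=-2C_{1313}$ and $C_{1313,2}=-2C_{1212,2}$ one gets $C_{1313,2}/C_{1313}=4\,C_{1212,2}/C_{1212}$, so the ratio is not constant across components. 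What actually makes the cyclic identity
\[
C_{bcde,a}+C_{cade,b}+C_{abde,c}=\Pi_a C_{bcde}+\Pi_b C_{cade}+\Pi_c C_{abde}
\]
hold with only $\Pi_2\neq 0$ is precisely the contribution of the off-diagonal derivatives such as $C_{1213,3}$ and $C_{1323,1}$ on the left-hand side; for instance, in the triple $(a,b,c)=(3,1,2)$ with $(d,e)=(1,3)$ the two extra terms $C_{1213,3}+C_{2313,1}$ correct the mismatch between $-C_{1313,2}$ and $-\Pi_2 C_{1313}$. So you must verify the full cyclic relation componentwise, using all the listed $C_{abcd,\alpha}$, rather than a single ratio test.
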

\begin{lem}\label{RTlem}
The metric \eqref{MTM} satisfies the Roter type condition on $U_C$ if $rf''+rf'-f'^2=0$.
	\end{lem}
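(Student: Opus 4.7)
The goal is to establish that under the ODE $rf''+rf'-f'^{2}=0$, the Riemann tensor of \eqref{MTM} can be written in the Roter type form
\[ R = e_{1}\,S\wedge S + e_{4}\,g\wedge g + e_{5}\,g\wedge S\]
for some smooth scalars $e_{1},e_{4},e_{5}$. My strategy is to start from the generalized Roter type decomposition \eqref{GRT} already established for this metric and to collapse it to Roter type by eliminating the $g\wedge S^{2}$ and $S\wedge S^{2}$ terms via an $Ein(2)$-type identity $S^{2}=\alpha\,g+\beta\,S$. Once such an identity is in hand, the bilinearity of the Kulkarni--Nomizu product gives
\[ g\wedge S^{2}=\alpha\,g\wedge g+\beta\,g\wedge S,\qquad S\wedge S^{2}=\alpha\,g\wedge S+\beta\,S\wedge S,\]
and substitution into \eqref{GRT} yields $R$ as a combination of $g\wedge g$, $g\wedge S$ and $S\wedge S$ only, i.e.\ precisely the Roter type ansatz.

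The first concrete step is to form the Ricci operator $\mathcal{J}$ by raising an index on the explicit Ricci components listed just before Lemma \ref{CFlem}. A short calculation shows that $\mathcal{J}$ has (generically) only two distinct eigenvalues, a triple one $\lambda_{1}=e^{-2f}(f'+rf'')/r$ on the subspace spanned by $\partial_{t},\partial_{z},\partial_{\phi}$ and a simple one $\lambda_{2}=e^{-2f}(rf''+2rf'^{2}-3f')/r$ on $\partial_{r}$. By the Cayley--Hamilton theorem the minimal polynomial of $\mathcal{J}$ then has degree two, so $\mathcal{J}^{2}=(\lambda_{1}+\lambda_{2})\,\mathcal{J}-\lambda_{1}\lambda_{2}\,\mathrm{id}$, and lowering an index yields the $Ein(2)$ identity $S^{2}=\beta\,S+\alpha\,g$ with $\beta=\lambda_{1}+\lambda_{2}$ and $\alpha=-\lambda_{1}\lambda_{2}$.

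Next, I would substitute $rf''=f'^{2}-rf'$ from the hypothesis into $\alpha$ and $\beta$ to bring them to their simplest form, and plug the $Ein(2)$ identity into \eqref{GRT}. After regrouping, the Roter type scalars come out to be
\[ e_{4}=\alpha\,\mathcal{L}_{13},\qquad e_{5}=\mathcal{L}_{12}+\beta\,\mathcal{L}_{13}+\alpha\,\mathcal{L}_{23},\qquad e_{1}=\mathcal{L}_{22}+\beta\,\mathcal{L}_{23}.\]
A final bookkeeping step verifies that on $U_{C}$ the denominators $\mathcal{L}_{r}=f'+rf''$ and $re^{2f}\mathcal{L}_{R}+f'$ appearing inside the $\mathcal{L}_{ij}$ remain nonzero under the hypothesis, so the $e_{i}$ are genuinely smooth on that open set.

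The principal obstacle is the algebraic matching in the regrouping: one must verify that the particular $\alpha,\beta$ produced by the hypothesis combine with the $\mathcal{L}_{ij}$ to reproduce $R$ component by component. If the $Ein(2)$ route fails to pin the ODE down sharply, a safer fallback is the brute-force approach: write the ansatz $R_{abcd}=e_{1}(S\wedge S)_{abcd}+e_{4}(g\wedge g)_{abcd}+e_{5}(g\wedge S)_{abcd}$ directly in the chart $(t,r,z,\phi)$ using the explicit components listed above, solve the resulting over-determined linear system for $e_{1},e_{4},e_{5}$, and show that its consistency conditions collapse, after simplification, precisely to $rf''+rf'-f'^{2}=0$.
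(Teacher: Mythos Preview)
Your approach is correct but differs from the paper's two-line argument, which simply invokes Remark~2.2 of \cite{DPS13}: on $\{C\ne 0\}$ a manifold is Roter type whenever it is both pseudosymmetric and satisfies $R\cdot R-Q(S,R)=\mathcal L'\,Q(g,C)$; since the latter is exactly \eqref{SPS} (already established on all of $U_C$) and the former is Lemma~\ref{PSlem} (established under the ODE hypothesis), the result follows at once. Your route---collapse \eqref{GRT} to Roter form via an $Ein(2)$ identity coming from the two-eigenvalue structure of $\mathcal J$---is more elementary and in fact delivers more than the Lemma asks: the Ricci operator of \eqref{MTM} has only the two eigenvalues $\lambda_1,\lambda_2$ for \emph{every} $f$, so $S^2=(\lambda_1+\lambda_2)S-\lambda_1\lambda_2\,g$ holds unconditionally, and substituting it into \eqref{GRT} yields a Roter type decomposition wherever the $\mathcal L_{ij}$ are defined, with no ODE assumed. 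In your write-up the hypothesis therefore enters only cosmetically (to simplify $\alpha,\beta$), and your fallback expectation---that the brute-force linear system for $e_1,e_4,e_5$ will produce the ODE as a consistency condition---will not be borne out: the system is consistent for generic $f$. The paper's citation-based proof is shorter and ties the Lemma to the pseudosymmetry established earlier, whereas your computation exposes that Roter type actually holds more broadly than stated.
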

\begin{proof}
$Remark$ $2.2$ of \cite{DPS13} states that a manifold $(M,g)$, $n\ge 4$, is Roter type on $\left\lbrace \; x\in M \; : \; (C)_x \ne 0 \;\right\rbrace $ if it is pseudosymmetric i.e., $R\cdot R=\mathcal L_R' \; Q(g,R)$ and satisfies the pseudosymmetric type condition $R\cdot R -Q(S,R)=\mathcal L' \;  Q(g,C)$, where $\mathcal L_R'$ and $\mathcal L'$ are some functions on $\left\lbrace \; x\in M \; : \; (C)_x \ne 0 \;\right\rbrace$. Hence, $Lemma$\; \ref{PSlem} and the relation \eqref{WPS} imply that the metric \eqref{MTM} satisfies the Roter type condition on $U_C$ if \; $rf''+rf'-f'^2=0$. 
\end{proof}
\begin{rem}\label{Roe}
	If $rf''+rf'-f'^2=0$ then by $Lemma$ \ref{RTlem} and from $Remark$ $2.2$ of \cite{DPS13} we can write the tensor $R$ on $U_C$ for the metric \eqref{MTM} as 
	\bea\label{RT}
	R=\mu\; S\wedge S -2\mu(\mathcal L_R-\mathcal L)\; g\wedge S +(\mu (\mathcal L_R-\mathcal L)^2 -\frac{\mathcal L}{4})\;g\wedge g
	\eea
	where $\mu$ is some function on $U_C$ and is given by $\mu =-\frac{1}{4(\mathcal L_R-\mathcal L)}$.
	\end{rem}
.
\begin{cor}\label{MTMcor}
Thus from $Theorem$ $6.7$ of \cite{DGHS11}  and by the $Lemma$ $\ref{RTlem}$, we obtain the condition $rf''+rf'^2 -f'=0$ as the sufficient condition of the metric \eqref{MTM} to realise the following geometric structures:\\
1. $R\cdot R=\mathcal L_RQ(g,R), \; \; \mathcal L_R=\frac{e^{-2f}}{r}(f'-rf'^2)$,\\
2. $C\cdot R=\mathcal L_RQ(g,R)$,\\
3. $S^2 +\lambda g=0,\;\; \lambda =-3\mathcal L_R(\mathcal L_R-\mathcal L)$,\\
4. $Q(S,C)=C\cdot R -R\cdot C$. 
\end{cor}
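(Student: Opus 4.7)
The plan is to treat the four assertions in turn, establishing item (1) directly from the preceding lemma and then reducing items (2)--(4) to properties of Roter-type manifolds via the invoked Theorem 6.7 of \cite{DGHS11}. The assertion in item (1) is actually nothing but a restatement of Lemma \ref{PSlem}: the hypothesis $rf''+rf'^2-f'=0$ is precisely what was assumed there, and the expression $\mathcal L_R=\frac{e^{-2f}}{r}(f'-rf'^2)$ is exactly the pseudosymmetry function produced by that lemma, so no new computation is needed here.

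Next I would tackle item (3). The generalized Roter decomposition \eqref{GRT} and the contracted relation \eqref{Ein3} already express $R$ and the Ricci powers in terms of $\mathcal L_R$, $\mathcal L$, $f'$, $f''$ and $\mathcal L_r=f'+rf''$. Under the hypothesis $rf''+rf'^2-f'=0$ I would substitute $rf''=f'-rf'^2$ into the coefficients $a_{11},a_{22},a_{33},a_{44}$ of \eqref{Ein3}; the expected outcome is that $a_{22}$ and $a_{44}$ simplify to zero (so the $S$ and $S^3$ terms drop out), leaving $a_{33}S^2+a_{11}g=0$. Dividing through and re-expressing the surviving coefficient in terms of $\mathcal L_R$ and $\mathcal L$ via \eqref{WPS}--\eqref{SPS} should then deliver $S^2+\lambda g=0$ with $\lambda=-3\mathcal L_R(\mathcal L_R-\mathcal L)$. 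This is essentially bookkeeping with the explicit components of $S$ already tabulated, so it is conceptually easy but algebraically the most involved step.

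For items (2) and (4) I would invoke Theorem 6.7 of \cite{DGHS11}, which for a Roter-type manifold bundles together the identities $C\cdot R=\mathcal L_R Q(g,R)$ and $Q(S,C)=C\cdot R-R\cdot C$ (together with the $S^2$-Einstein-type identity) into a single characterization. To apply it, I would combine Lemma \ref{PSlem} with the pseudosymmetric Weyl identity \eqref{WPS} and the identity \eqref{SPS}; by Remark \ref{Roe} the resulting $R$ admits the Roter decomposition \eqref{RT} with $\mu=-\frac{1}{4(\mathcal L_R-\mathcal L)}$. Once the hypotheses of the cited Theorem 6.7 are verified, items (2) and (4) follow without further computation, and the function multiplying $Q(g,R)$ in item (2) is precisely the $\mathcal L_R$ of Lemma \ref{PSlem}, as required.

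The main obstacle I anticipate is reconciling the two different-looking conditions in Lemmas \ref{PSlem} and \ref{RTlem}: the corollary's hypothesis is the pseudosymmetry condition $rf''+rf'^2-f'=0$, whereas the Roter-type lemma is stated under $rf''+rf'-f'^2=0$. To apply Theorem 6.7 of \cite{DGHS11} one needs the metric to be simultaneously pseudosymmetric and to satisfy $R\cdot R-Q(S,R)=\mathcal L\,Q(g,C)$ on $U_C$; since the latter is \eqref{SPS} and holds unconditionally on $U_C$, the additional input from the hypothesis is exactly item (1). Thus the role of the hypothesis is to force the generalized Roter decomposition \eqref{GRT} to collapse onto the ordinary Roter decomposition \eqref{RT}, and the degeneration of the Ein(3) identity to the $S^2$-Einstein identity in item (3) is the concrete manifestation of that collapse. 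Once this conceptual point is laid out carefully, the remaining work is the substitution and simplification described above.
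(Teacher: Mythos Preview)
Your plan is essentially the paper's own argument: the corollary is stated without a separate proof, its justification being the phrase ``from Theorem~6.7 of \cite{DGHS11} and by Lemma~\ref{RTlem}'' --- i.e.\ Lemma~\ref{RTlem} gives the Roter decomposition and Theorem~6.7 then delivers all four items at once. Your separate substitution into the $Ein(3)$ coefficients for item~(3) is therefore redundant (Theorem~6.7 already packages the $S^2+\lambda g=0$ identity with the others), though it would of course work as an independent check; and you are right that the discrepancy between $rf''+rf'^2-f'=0$ and $rf''+rf'-f'^2=0$ is a typographical slip in the paper --- the Melvin choice $f=\ln(1+B_0^2 r^2/4)$ satisfies the former, which is the intended hypothesis throughout.
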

\section{\bf{Melvin magnetic metric admitting geometric structures}}
%
It is easy to see that $f(r)=ln(1+\frac{B_0r^2}{4})$ satisfies the equation $rf''+rf'^2-f'=0$. Hence from $Lemma$ \ref{PSlem}, $Lemma$ \ref{RTlem} and $Corollary$ \ref{MTMcor} we can state the following:
\begin{thm}
The Melvin magnetic metric \eqref{MMM} admits the following properties:\\
(i) scalar curvature $\kappa=0$ and hence $C=K$ and $R=W$,\\
(ii) it is pseudosymmetric and pseudosymmetric due to conformal curvature tensor i.e., satisfies the relations 
$$R\cdot R=\mathcal L_1\;Q(g,R),\; \; C\cdot R=\mathcal L_1\;Q(g,R) \; \; where\;\; \mathcal L_1=\frac{32B_0^2(4-B_0^2r^2)}{(4+B_0^2r^2)^4},$$
(iii) it also satisfies the following pseudosymmetric type curvature conditions
\bea
(a)\;\;&& R\cdot R -Q(S,R)=\mathcal L_2\;Q(g,C),\; \; \mathcal L_2=-\frac{32B_0^2(16+24B_0^2r^2-3B_0^4r^4)}{3(4-B_0^2r^2)(4+B_0^2r^2)^4} \nonumber \\
(b)\;\;&& Q(S,C)=C\cdot R -R\cdot C,\nonumber\\
(c)\;\;&& C\cdot R -R\cdot C=\mathcal L_3\;Q(g,R) +\mathcal L_4\;Q(S,R),\;\; \mathcal L_3=-\frac{2048B_0^2(4-B_0^2r^2)}{\omega (4+B_0^2r^2)^4}, \nonumber \\
&& \mathcal L_4=1+\frac{64}{\omega},\;\; \omega=-16-24B_0^2r^2+3B_0^4r^4,\nonumber
\eea
(iv) curvature $2$-forms for $C$ or $K$ are recurrent with associated $1$-form of recurrency $$\Pi=\left\lbrace 0, 0, -\frac{16B_0^2r}{(4-B_0^2r^2)(4+B_0^2r^2)}, 0 \right\rbrace, $$
(v) it is Roter type spacetime i.e., satisfies $R=N_1\;S\wedge S +N_2\; g\wedge S +N_3\; g\wedge g$\\ where $$N_1=-\frac{3(4-B_0^2r^2)(4+B_0^2r^2)^4}{8192B_0^2},\;\; N_2=\frac{1}{2},\;\; N_3=-\frac{8B_0^2(4-B_0^2r^2)}{(4+B_0^2r^2)^4}$$
(vi) $2$-quasi Einstein for $\alpha = \frac{256B_0^2}{(4+B_0^2r^2)^4}$,\\
(vii) generalized quasi Einstein due to Chaki for $$\alpha=-\frac{256B_0^2}{(4+B_0^2r^2)},\; \; \beta=-1, \; \; \gamma=1, \; \; \Pi=\left\lbrace -\frac{4 \sqrt{2}B_0}{(4+B_0^2r^2)},\; \; 0,\; \; \frac{4\sqrt{2}B_0}{\sqrt{(16+8B_0^2r^2+B_0^4r^4)}},\;\; 0 \right\rbrace ,$$ $$ \delta=\left\lbrace 0, \; \; 0,\;\; \frac{4\sqrt{2}B_0}{(4+B_0^2r^2)},\;\; 0 \right\rbrace, \; \; with\;\; \left\| \Pi\right\|=0 \;\; and \;\; \left\| \delta\right\|=\frac{512B_)^2}{(4+B_0^2r^2)^4},$$
(viii) $Ein(2)$ spacetime, since $S^2+ \lambda\; g=0$ for $\lambda=-\frac{65536B_0^2}{(4+B_0^2r^2)^8},$\\
(ix) Ricci tensor is Reimann compatible only.
\end{thm}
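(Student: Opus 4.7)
The plan is to derive essentially every assertion from the single algebraic identity $rf''+rf'^2-f'=0$ satisfied by the Melvin profile $f(r)=\ln(1+B_0 r^2/4)$, and then to apply the machinery of Section 3 termwise. First I would compute $f'=\frac{2B_0 r}{4+B_0 r^2}$ and $f''=\frac{2B_0(4-B_0 r^2)}{(4+B_0 r^2)^2}$ and verify the ODE directly; together with $e^{2f}=U_B^2$ this also confirms that \eqref{MTM} reproduces \eqref{MMM}, so that $Lemma$ \ref{PSlem}, $Lemma$ \ref{RTlem} and $Corollary$ \ref{MTMcor} all apply.

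Part (i) is then immediate: the scalar-curvature formula $\kappa=\frac{2e^{-2f}}{r}(rf''+rf'^2-f')$ recorded in Section 3 vanishes, which by the definitions of $C$ and $W$ yields $C=K$ and $R=W$. Parts (ii), (iii)(a), (iii)(b), (iv), (v) and (viii) then follow purely by evaluating previously proved formulas at our particular $f$: substituting $f'$ and $f''$ into $\mathcal{L}_R=\frac{e^{-2f}}{r}(f'-rf'^2)$ from $Lemma$ \ref{PSlem} produces $\mathcal{L}_1$ for (ii), while $Corollary$ \ref{MTMcor}(2) delivers the second identity in (ii); relation \eqref{SPS} yields $\mathcal{L}_2$ in (iii)(a); $Corollary$ \ref{MTMcor}(4) is exactly (iii)(b); the expression for $\Pi_2$ just above \eqref{c2f} produces the recurrence $1$-form in (iv); $Remark$ \ref{Roe} with $\mu=-1/[4(\mathcal{L}_R-\mathcal{L})]$ supplies the Roter coefficients $N_1,N_2,N_3$ in (v); and $Corollary$ \ref{MTMcor}(3) with $\lambda=-3\mathcal{L}_R(\mathcal{L}_R-\mathcal{L})$ gives (viii).

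The genuinely non-routine items are (iii)(c), (vi), (vii) and (ix). For (iii)(c) I would compute $C\cdot R-R\cdot C$ componentwise from the $C$, $R$ and $S$ entries tabulated in Section 3, postulate the ansatz $\mathcal{L}_3\,Q(g,R)+\mathcal{L}_4\,Q(S,R)$, and solve the resulting $2\times 2$ linear system at two algebraically independent component locations, with the remaining components serving as an a posteriori consistency check. For (vi), after substituting $f$ into $S_{ab}$, I would verify that $S-\alpha g$ has exactly two non-zero eigenvalues at the claimed $\alpha$. For (vii) the crux is to guess the supports of the covectors: I would postulate that $\delta$ is supported only in the $z$-slot and $\Pi$ only in the $t$- and $z$-slots, then fit $\alpha,\beta,\gamma$ so that $S=\alpha g+\beta\,\Pi\otimes\Pi+\gamma(\Pi\otimes\delta+\delta\otimes\Pi)$ reproduces each $S_{ab}$, using the orthogonality condition $F(Y,U)=0$ from Section 2 to pin down the normalizations (which also forces $\Pi$ to be null, matching the claimed $\|\Pi\|=0$). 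Finally, (ix) follows from (v): once $R$ is written as an algebraic combination of $g\wedge g$, $g\wedge S$ and $S\wedge S$, Riemann-compatibility of the Ricci tensor $S$ reduces to an algebraic identity in Kulkarni--Nomizu products that holds automatically from their symmetries.

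The principal obstacle I anticipate is (vii): guessing the correct supports of $\Pi$ and $\delta$ so that Chaki's decomposition closes together with the constraint $F(Y,U)=0$. With the wrong ansatz for their support the system is over-determined; with the right ansatz the remaining calculation is mechanical. Every other step reduces either to scalar substitution into an already proved identity or to a finite componentwise check.
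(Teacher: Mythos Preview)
Your proposal is correct and matches the paper's approach essentially line for line: the paper also begins by noting that $f(r)=\ln(1+B_0r^2/4)$ satisfies $rf''+rf'^2-f'=0$, then invokes $Lemma$~\ref{PSlem}, $Lemma$~\ref{RTlem} and $Corollary$~\ref{MTMcor} (together with the componentwise formulae of Section~3) to read off each item, with the explicit constants and the residual items (iii)(c), (vi), (vii), (ix) obtained by direct Mathematica computation rather than any additional structural argument. Your derivation of (ix) via the Roter decomposition is in fact slightly more conceptual than what the paper offers, though you should note that the word ``only'' in (ix) presumably also requires checking non-compatibility with $P$ (or $C$), which is again a finite componentwise verification.
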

\begin{rem}
It is worthy to note the following facts about the Melvin magnetic spacetime:\\
1. Melvin magnetic spacetime is a pseudosymmetric warped product of $3$-dimensional pseudosymmetric base and $1$-dimensional fibre with warping function $F(r)=r/U_B$, where $U_B=1 +\frac{B_0r^2}{4}$.\\
2. Melvin magnetic spacetime, a warped product of $3$-dimensional conharmonically recurrent base and $1$-dimensional fibre, is not conharmonically recurrent but its curvature $2$-forms for conharmonic tensor are recurrent.\\
3. Melvin magnetic spacetime is a $2$-quasi-Einstein warped product spacetime of $3$-dimensional quasi-Einstein base and $1$-dimensional fibre.\\
4. Referred to the $Remark$ \ref{rem1}, the Melvin magnetic spacetime is a warped product spacetime of $3$-dimensional base and $1$-dimensional fibre satisfying both the relations \eqref{WT} and \eqref{DT}.
\end{rem}
The non zero components of Maxwell tensor from \eqref{MF} are given by
$F_{24}=\frac{8B_0r}{(4+B_0^2r^2)^2}=-F_{42}$. The non-zero components of the tensors $R\cdot F$ and $Q(g,F)$ are $$(R\cdot F)_{1412}=-(R\cdot F)_{1214}=-\frac{16B_0^3r(4-B_0^2r^2)}{(4+B_0^2r^2)}=R\cdot F_{3423}=-R\cdot F_{2334};$$ $$Q(g,F)_{1214}=-Q(g,F)_{1412}=\frac{B_0r}{2}=Q(g,F)_{2334}=-Q(g,F)_{3423}.$$
From above we have the following:
\begin{thm}
$R\cdot F=\mathcal L_F\; Q(g,F)$ where $\mathcal L_F=\frac{32B_0^2(4-B_0^2r^2)}{(4+B_0^2r^2)^4}$ i.e., Maxwell tensor of Melvin spacetime is pseudosymmetric type.
\end{thm}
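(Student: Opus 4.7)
The plan is to verify the pseudosymmetry relation $R\cdot F = \mathcal L_F\, Q(g,F)$ by direct componentwise comparison in the chart $(t,r,z,\phi) = (x^1,x^2,x^3,x^4)$ already used for the Melvin magnetic metric. This is an identity between two $(0,4)$-tensors whose non-vanishing independent components have in fact been recorded immediately before the theorem; so the work reduces to verifying that no other components of either side are non-zero, and that the ratio of matching components is the same scalar function.

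First I would record the structural input from \eqref{MF}: the Maxwell field has a single independent non-zero component $F_{24} = -F_{42} = 8B_0 r/(4+B_0^2 r^2)^2$, so the indices $1$ and $3$ never appear as a leg of $F$. Substituting this into
$$(R\cdot F)_{ab;\alpha\beta} = -g^{rs}\bigl[R_{\alpha\beta as}F_{rb} + R_{\alpha\beta bs}F_{ar}\bigr]$$
and using that $g$ is diagonal forces the contracted index $s$ to equal $2$ or $4$, and the surviving free index of $F$ must also lie in $\{2,4\}$. Combined with the skew symmetries of $R$ this prunes the list of candidate component pairs to exactly the ones displayed. A short substitution of the Riemann components from Section 3, specialized to $f(r) = \ln(1 + B_0 r^2/4)$, then reproduces the claimed values of $(R\cdot F)_{1412}$, $(R\cdot F)_{3423}$ and their skew partners.

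Second I would apply the definition
$$Q(g,F)_{ab;\alpha\beta} = g_{\beta a}F_{\alpha b} + g_{\beta b}F_{a\alpha} - g_{\alpha a}F_{\beta b} - g_{\alpha b}F_{a\beta}$$
to the diagonal metric \eqref{MMM}; since $F$ is supported only on the index pair $(2,4)$ this immediately yields the four non-zero independent components listed. Taking the ratio of each non-vanishing component of $R\cdot F$ to the corresponding component of $Q(g,F)$ then must collapse to a single scalar, which a short simplification identifies as $\mathcal L_F = 32 B_0^2 (4 - B_0^2 r^2)/(4 + B_0^2 r^2)^4$ on the open set $\{x \in M : Q(g,F)_x \neq 0\}$, establishing the identity.

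The only real obstacle is arithmetic bookkeeping: tracking sign conventions, exploiting the skew symmetry $F_{ab} = -F_{ba}$ and the pair/block symmetries of $R$ to avoid redundant work, and checking that the ratio $\mathcal L_F$ is genuinely common to every non-trivial component pair (otherwise the proportionality would fail outright). There is no conceptual step beyond what Section 3 has already supplied, since the Riemann tensor is known in closed form and $F$ has only one independent component.
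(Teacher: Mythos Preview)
Your proposal is correct and follows essentially the same route as the paper: the paper simply lists the non-zero components of $R\cdot F$ and $Q(g,F)$ (computed in Mathematica) immediately before the theorem and reads off the common ratio, and you propose precisely this componentwise verification, adding a brief structural explanation of why only the displayed components survive. There is no alternative argument in the paper to compare against.
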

%
\section{\bf{Conclusion}}
We have studied the curvature restricted geometric properties of Melvin magnetic spacetime, a warped product with $1$-dimensional fibre, and found that it is non-semisymmetric pseudosymmetric warped product with $3$-dimensional pseudosymmetric base. Also it is non-quasi-Einstein $2$-quasi-Einstein warped product spacetime with $3$-dimensional quasi-Einstein base. Moreover its Maxwell tensor is pseudosymmetric type. Hence, Melvin magnetic spacetime is evidently a model of $4$-dimensional pseudosymmetric and $2$-quasi-Einstein warped product manifolds with $1$-dimensional fibre.\\
\textbf{Acknowledgement.} The authors would like to express their gratitude to Deanship of Scientific Research at King Khalid University, Abha, Saudi Arabia for providing funding research groups under the research grant number R. G. P.$2/57/40$. All the algebraic computations are performed by a program in Wolfram Mathematica.


\end{document}